\documentclass[english,11pt]{amsart}
\usepackage{amsopn,amsthm,amsfonts,amsmath,amssymb,amscd}
\usepackage{babel}
\usepackage{amssymb,tikz}

\textwidth 15cm
\textheight 21.5cm
\topmargin -1cm
\oddsidemargin 5pt
\evensidemargin 5pt
\pagestyle{plain}

\newtheorem{Theorem}{Theorem}[section]
\newtheorem{Lemma}[Theorem]{Lemma}

\newtheorem{Def}[Theorem]{Definition}

\newenvironment{Proof*}{{\it Proof.}}

\newcommand{\NN}{\mathbb{N}}

\newcommand{\ZZ}{\mathbb{Z}}
\newcommand{\AG}{\mathbb{AG}}

\newcommand{\II}{\mathcal{I}}
\newcommand{\LL}{\mathcal{L}}
\newcommand{\MM}{\mathcal{M}}

\newcommand{\DEF}[1]{\emph{#1}}

\newcommand{\MD}{{\rm dim_M}}

\newcommand{\diam}[1]{{\rm diam}(#1)}

\begin{document}

\title{The metric dimension of the annihilating-ideal graph of a finite commutative ring}
\author{David Dol\v zan}
\date{\today}

\address{D.~Dol\v zan:~Department of Mathematics, Faculty of Mathematics
and Physics, University of Ljubljana, Jadranska 21, SI-1000 Ljubljana, Slovenia; e-mail: 
david.dolzan@fmf.uni-lj.si}

 \subjclass[2010]{05C25, 16P60, 13F10}
 \keywords{finite ring, metric dimension, annihilating-ideal graph}
  \thanks{The author acknowledges the financial support from the Slovenian Research Agency  (research core funding No. P1-0222)}

\bigskip

\begin{abstract} 
We determine the metric dimension of the annihilating-ideal graph of a local finite commutative principal ring and a finite commutative principal ring with two maximal ideals. We also find the bounds for  the metric dimension of the annihilating-ideal graph of an arbitrary finite commutative principal ring.
\end{abstract}

\maketitle 

\section{Introduction}

\bigskip

One of the most important and active areas in algebraic combinatorics is the study
of graphs associated with different algebraic structures. This field has attracted many researchers during the past 30 years. 

One of the most basic concepts in the study of rings is the notion of a zero-divisor. Thus, in 1988, Beck \cite{beck88} first introduced the concept of the zero-divisor graph of a commutative ring. In 1999, Anderson and Livingston \cite{andliv99} made a slightly different definition of the zero-divisor graph in order to be able to investigate the zero-divisor structure of commutative rings. 

In \cite{AndBad08}, Anderson and Badawi introduced a similar notion of a total graph of a 
commutative ring $R$ as the graph with all elements of $R$ as vertices, and for distinct
$x, y \in R$, the vertices $x$ and $y$ are adjacent if and only if $x + y$ is a zero-divisor in $R$. 

In \cite {behboodi1} and \cite {behboodi2}, the authors defined and investigated an ideal-theoretic version of the zero-divisor graph, called the annihilating-ideal graph of a commutative ring. If $R$ is a commutative ring, then its annihilating-ideal graph, $\AG(R)$, is the graph whose vertices are the nonzero ideals of $R$ which have nontrivial annihilators, and in which there is an edge between two distinct ideals $I$ and $J$ if and only if $IJ = 0$. The annihilating-ideal graph is a natural object of study, since many facts about zero-divisors are easily expressed in the language of ideals. The annihilating-ideal graph eliminates some of the redundancy of the standard zero-divisor graph, and encodes some the the ring theoretic information more efficiently. 
On the other hand, since the annihilator-ideal graph reflects only information about the ideals of a ring, it cannot make certain fine distinctions between nonisomorphic rings which have very similar zero-divisor behavior. For example, $\AG(\ZZ_{16})$ and $\AG(\ZZ_{81})$ are isomorphic as graphs (each is a path of length 3) despite the fact that $\ZZ_{16}$ and $\ZZ_{81}$ are not isomorphic as rings.  Much study has been devoted to the subject of annihilating-ideal graphs (see for example \cite{aalipour, alini, curtis, nikandish, shaveisi, tamizh}).

The sequence of edges $x_0 - x_1$, $ x_1 - x_2$, ..., $x_{k-1} - x_{k}$ in a graph is called \emph{a path of length $k$}. The \DEF{distance} between vertices $x$ and $y$ is the length of the shortest
path between them, denoted by $d(x,y)$. 
The \DEF{diameter} $\diam{\Gamma}$ of the graph $\Gamma$ is the longest distance between any two vertices of the graph.
For an ordered subset $W=\{ w_1,w_2,\ldots,w_k\}$ of the vertex set of graph $G$ and a vertex $v$ of $G$ the $k$-vector
$r(v|W)=(d(v,w_1),d(v,w_2),\ldots,d(v,w_k))$ is called the \DEF{representation} of $v$ with respect to $W$. A set $W$ is called a \DEF{resolving set} for $G$ if distinct vertices of $G$ have distinct representations with respect to $W$. A resolving set of minimal cardinality for $G$ is a \DEF{basis} of $G$ and the cardinality of the basis is called the \DEF{metric dimension} of $G$, denoted by $\MD(G)$ \cite{chartrand}. Motivated by the problem of uniquely determining the location of an intruder in a network, Slater \cite{slater} introduced the concept of a metric dimension. The metric dimension was then studied by Harary and Melter \cite{harary} and it has appeared in various applications of graph theory, for example pharmaceutical chemistry \cite{cameron, chartrand}, robot navigation \cite{khuller}, combinatorial optimization \cite{sebo} and sonar and coast guard long range navigation \cite{slater}.
It turns out that determining the metric dimension of a graph remains a NP-complete problem even in special cases like bounded-degree planar graphs \cite{diaz}, or split graphs, bipartite graphs and their complements, line graphs of bipartite graphs \cite{epstein}.

Recently, Faisal et al. \cite{faisal} studied the metric dimension of the commuting graph of a dihedral group. In the ring setting, the metric dimension of a zero-divisor graph of a commutative ring was studied in \cite{pirzada}, while the metric dimension of a total graph of a finite commutative ring was studied in \cite{dolzan1}.

In this paper, we study the metric dimension of the annihilating-ideal graph of a commutative ring. Our main result is to exactly determine the metric dimension of the annihilating-ideal graph of a local finite commutative principal ring and a finite commutative principal ring with two maximal ideals. We also find the bounds for  the metric dimension of the annihilating-ideal graph of an arbitrary finite commutative principal ring (see Theorem \ref{main}).

\bigskip


\bigskip
\section{The metric dimension of the annihilating-ideal graph}
\bigskip

In this section, we shall investigate the metric dimension of the annihilating-ideal graph of a finite ring. 
As is well known (see for example \cite[Theorem VI.2]{mcdonald}), every finite commutative ring can be written as a direct sum of local rings. Also,
(see for example \cite[Proposition IV.7]{mcdonald}) the Jacobson radical $J$ of a finite ring $R$ is nilpotent. Thus there exists $n \in \NN$ such that $J^n=0$ and $J^{n-1} \neq 0$. We shall call this $n$ \emph{the nilpotency order} of $J$.
Recall, that a ring $R$ is called a \emph{chain ring} if all its ideals form a chain under inclusion. A well known result (see for example \cite[Theorem XVII.5]{mcdonald}) states that every finite principal local commutative ring is a chain ring.

Next, we shall also need the following definition.

\bigskip

\begin{Def}
Let $v$ be a vertex of a graph $G$. Then the \emph{open neighbourhood} of $v$ is $N(v)=\{u \in V(G); \text{ there exists an edge } uv \text { in G}\}$ and the \emph{closed neighbourhood} of $v$ us $N[v]=N(v) \cup \{v\}$.  
\end{Def}

%
%
%

\bigskip

We now have to investigate the distances between the ideals in $\AG(R)$. We know that any two ideals in $\AG(R)$ are at distance at most 3 (see \cite[Theorem 2.1]{behboodi1}). The next lemma describes the distances between ideals in $\AG(R)$ in more of a detail. 

\bigskip

\begin{Lemma}\label{distance2}
Let $R$ be a finite ring and $R \simeq R_1 \times R_2 \times \ldots \times R_n$ where for each $i=1,2,\ldots,n$, the ring $R_i$ is a local ring with the Jacobson radical $J_i$ of nilpotency order $n_i$. Let $\pi_s$ denote the canonical projection of $R$ onto $R_s$.  Let $M, N$ be distinct ideals in $\AG(R)$. Then $d(M,N) = 3$  if and only if $R_s \in \{\pi_s(M), \pi_s(N)\}$ for all $s=1,2, \ldots, n$ and there exists $k \in \{1,2, \ldots, n\}$ such that $0 \notin \{\pi_k(M), \pi_k(N)\}$. 
\end{Lemma}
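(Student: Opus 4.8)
The plan is to reduce everything to coordinatewise computations in the factors $R_s$, using that every ideal of $R$ splits as a product $M = \prod_s \pi_s(M)$ and that products of ideals multiply componentwise, so $MN = 0$ if and only if $\pi_s(M)\pi_s(N) = 0$ for every $s$. Two standard facts about a finite local ring $R_s$ do the work: (i) the sum of two ideals equals $R_s$ exactly when one of them already equals $R_s$, since otherwise both lie in $J_s$; and (ii) every proper ideal $I \subsetneq R_s$ has nonzero annihilator, because $I \cdot J_s^{n_s - 1} \subseteq J_s^{n_s} = 0$ while $J_s^{n_s-1} \neq 0$. Combining these, $\Ann(\pi_s(M) + \pi_s(N)) = 0$ precisely when $R_s \in \{\pi_s(M), \pi_s(N)\}$. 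Since $\diam{\AG(R)} \le 3$ by \cite[Theorem 2.1]{behboodi1}, I only need to show that the two listed conditions together are equivalent to ``$d(M,N) \neq 1$ and $d(M,N) \neq 2$'', i.e. to ``$MN \neq 0$ and $M, N$ have no common neighbour''. Write (A) for the condition that $R_s \in \{\pi_s(M),\pi_s(N)\}$ for all $s$, and (B) for the condition that some coordinate $k$ has $\pi_k(M) \neq 0 \neq \pi_k(N)$.

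For the backward implication, assume (A) and (B). First, $MN \neq 0$: at the coordinate $k$ from (B), one of the two projections, say $\pi_k(M)$, equals $R_k$ by (A), whence $\pi_k(M)\pi_k(N) = \pi_k(N) \neq 0$, so $d(M,N) \neq 1$. Next, any common neighbour $L$ would satisfy $LM = LN = 0$, forcing $\pi_s(L) \subseteq \Ann(\pi_s(M) + \pi_s(N))$ for all $s$; but (A) makes every such annihilator zero, so $L = 0$ would not be a vertex. Hence $d(M,N) \neq 2$, and therefore $d(M,N) = 3$.

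For the forward implication, suppose $d(M,N) = 3$. From $d(M,N)\neq 1$ we get $MN \neq 0$, so some coordinate $k$ has $\pi_k(M)\pi_k(N) \neq 0$, which forces $\pi_k(M) \neq 0 \neq \pi_k(N)$; this is exactly (B). For (A) I argue by contradiction: if some coordinate $s$ has $R_s \notin \{\pi_s(M), \pi_s(N)\}$, then $\pi_s(M)$ and $\pi_s(N)$ are both proper, so $\pi_s(M) + \pi_s(N)$ is a proper ideal of $R_s$ with nonzero annihilator. I then build a candidate common neighbour $L$ by placing a nonzero ideal $L_s \subseteq \Ann(\pi_s(M) + \pi_s(N))$ (for instance $J_s^{n_s-1}$, the degenerate field case being absorbed by taking $L_s = R_s$, which forces $n \ge 2$) in coordinate $s$ and $0$ in every other coordinate. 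By construction $L$ is a nonzero proper ideal, hence a vertex, and $LM = LN = 0$.

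The one point that genuinely needs care — and the main obstacle — is ruling out $L = M$ and $L = N$, since a common neighbour must be distinct from both endpoints. Here the hypothesis $MN \neq 0$ is exactly what saves the argument: if $L = M$, then $M$ is supported only on coordinate $s$ with $\pi_s(M) = L_s \subseteq \Ann(\pi_s(M) + \pi_s(N)) \subseteq \Ann(\pi_s(N))$, which makes $\pi_s(M)\pi_s(N) = 0$ and hence $MN = 0$, a contradiction; the case $L = N$ is symmetric. Thus $L$ is a genuine common neighbour, giving $d(M,N) \le 2$ and contradicting $d(M,N) = 3$. Therefore (A) holds, completing the characterization.
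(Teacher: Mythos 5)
Your proof is correct and takes essentially the same approach as the paper: a coordinatewise analysis in which the ideal concentrated in coordinate $s$ with value $J_s^{n_s-1}$ serves as the common neighbour when the condition $R_s \in \{\pi_s(M),\pi_s(N)\}$ fails, while that condition holding at every coordinate forces any would-be common neighbour to be the zero ideal, with the cited bound $\diam{\AG(R)} \leq 3$ closing the argument. The only differences are cosmetic (packaging via $\Ann(\pi_s(M)+\pi_s(N))$) and your explicit check that $L \neq M, N$, a point the paper leaves implicit.
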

\begin{proof}
 Let $I_{s,k}$ be an ideal of $R$ such that $\pi_i(I_{s,k})=0$ for all $i \neq s$ and $\pi_s(I_{s,k})=J_s^k$.

If $0 \in \{\pi_k(M), \pi_k(N)\}$ for all $k  \in \{1,2, \ldots, n\}$, then $d(M, N)=1$. If there exists $s \in \{1,2,\ldots,n\}$ such that $R_s \notin \{\pi_s(M), \pi_s(N)\}$ then $M$ and $N$ are both connected to $I_{s,n_s-1}$, so $d(M,N) \leq 2$. Conversely, suppose that $R_s \in \{\pi_s(M), \pi_s(N)\}$ for all $s=1,2,\ldots,n$ and there exists $k \in \{1,2, \ldots, n\}$ such that $0 \notin \{\pi_k(M), \pi_k(N)\}$. The latter condition yields that $d(M,N) \neq 1$. If $d(M,N) = 2$ then there exists $T \in \AG(R)$ such that $T$ is connected to $M$ and $N$. However, this implies that $\pi_s(T)=0$ for all $s=1,2,\ldots,n$ which is a contradiction.
\end{proof}

\bigskip

Suppose $R$ is a a finite commutative principal ring with exactly $n$ maximal ideals.
In the next theorem, we exactly determine the metric dimension of $\AG(R)$ in case $n \leq 2$ and find the bounds for metric dimension of $\AG(R)$ in case $n \geq 3$.

\bigskip

\begin{Theorem}\label{main}
Let $R$ be a finite commutative principal ring. Then $R \simeq R_1 \times R_2 \times \ldots \times R_n$ where for each $i=1,2,\ldots,n$, the ring $R_i$ is a local chain ring with the Jacobson radical $J_i$ of nilpotency order $n_i$. Moreover, the following statements hold.
\begin{enumerate}
\item
If $n=1$ then $\MD(\AG(R)) =\lfloor{\frac{n_1-1}{2}}\rfloor$. 
\item
If $n =2$ then $\MD(\AG(R)) = n_1 + n_2 -2+\epsilon$, where $\epsilon=1$ if $R_1, R_2$ are both fields and $\epsilon=0$ otherwise.
\item
If $n \geq 3$ then $\left(\sum_{i=1}^n{n_i}\right)-n+\epsilon \leq \MD(\AG(R)) \leq \sum_{i=1}^n{n_i}$, where $\epsilon=0$ if none of $R_1,R_2,\ldots,R_n$ are fields and $\epsilon=\lceil\log_2(\beta)\rceil$, where $\beta$ denotes the number of rings $R_i$ that are fields.
\end{enumerate}
\end{Theorem}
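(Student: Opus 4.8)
Throughout I would pass to the exponent encoding of ideals. Because each $R_i$ is a chain ring, every ideal of $R$ is $J_1^{a_1}\times\cdots\times J_n^{a_n}$ with $a_i\in\{0,\dots,n_i\}$ (writing $J_i^0=R_i$, $J_i^{n_i}=0$), so the vertices of $\AG(R)$ are exactly the tuples $(a_1,\dots,a_n)\neq(0,\dots,0),(n_1,\dots,n_n)$. The distances become combinatorial: $d(M,N)=1$ iff $a_s+b_s\ge n_s$ for all $s$; by Lemma \ref{distance2}, $d(M,N)=3$ iff $\min(a_s,b_s)=0$ for all $s$ and $\max(a_k,b_k)<n_k$ for some $k$; and $d(M,N)=2$ otherwise. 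Resolving $\AG(R)$ is then equivalent to choosing a set of tuples meeting, for every pair $M\neq N$, the distinguishing set $\{T:d(T,M)\neq d(T,N)\}$.

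For part (1) the graph has diameter $\le 2$, and a landmark $J_1^{w}$ records only the single bit $[a\ge n_1-w]$, i.e. it acts as a threshold on the one coordinate. For the lower bound, $k$ landmarks induce at most $k+1$ threshold classes, any two non-landmark vertices in the same class are unresolved, and there are $n_1-1-k$ non-landmark vertices; hence $n_1-1-k\le k+1$, giving $\MD(\AG(R))\ge\lfloor(n_1-1)/2\rfloor$. For the matching upper bound I would exhibit an explicit central block of $\lfloor(n_1-1)/2\rfloor$ consecutive powers of $J_1$ whose thresholds cut the remaining vertices into singletons, and check that the representations are pairwise distinct.

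For parts (2) and (3) the principal tool is the axis ideals $I_{s,k}$ of Lemma \ref{distance2}: a direct computation gives $d(M,I_{s,k})\in\{1,2\}$ for every vertex $M$, with value $1$ iff $a_s\ge n_s-k$, so these are clean threshold detectors for coordinate $s$. The upper bounds $\sum_i n_i$ (general) and $n_1+n_2-2+\epsilon$ (for $n=2$) would be obtained by assembling such detectors together with a few corner ideals chosen to pin down the top ambiguity $a_s\in\{n_s-1,n_s\}$ in each coordinate and to separate the field coordinates, then verifying the resulting representations separate all tuples; already in small cases the correct resolving set is a non-obvious mixture of corner and axis ideals, so this verification is where the construction does real work. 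The field contribution $\epsilon=\lceil\log_2\beta\rceil$ admits a clean lower bound: for the $\beta$ field coordinates $s_1,\dots,s_\beta$, consider the vertices $E_i$ that are zero in coordinate $s_i$ and equal to $R_t$ in every other coordinate; for a landmark $T$ not concentrated in coordinate $s_i$ one computes $d(E_i,T)=3$ exactly when $\pi_{s_i}(T)=R_{s_i}$ and $d(E_i,T)=2$ otherwise, so $T$ separates $E_i$ from $E_j$ only according to the bit $[\pi_{s_i}(T)=R_{s_i}]$ versus $[\pi_{s_j}(T)=R_{s_j}]$. Separating all $\beta$ of them forces the $\beta$ bit-columns indexed by $W$ to be distinct, whence $|W|\ge\lceil\log_2\beta\rceil$.

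The hard part is the non-logarithmic lower bound $\sum_i n_i-n$ for $n\ge2$, together with pinning the constant exactly when $n=2$. Coordinatewise counting in the spirit of part (1) is too weak on its own, because the adjacency condition couples all coordinates (a landmark is active on coordinate $s$ only when its other coordinates do not block adjacency) while a single landmark carries threshold information in every coordinate at once; one must show that, despite this sharing, the distinguishing sets of a well-chosen family of pairs cannot all be met by fewer than $\sum_i n_i-n+\epsilon$ landmarks. Controlling the borderline tuples having a coordinate equal to $0$ or $n_s$, where thresholds saturate and near-twin pairs arise, is where the bookkeeping is most delicate; and for $n=2$ the additional content is that these saturation effects let the two top ambiguities be resolved at no cost beyond $\epsilon$, which is what collapses the range $[\sum_i n_i-n+\epsilon,\ \sum_i n_i]$ to the single value $n_1+n_2-2+\epsilon$.
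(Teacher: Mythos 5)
Your exponent encoding and your part (1) coincide with the paper's own treatment (the same threshold-counting lower bound $s\geq n_1-2-s$ and an explicit block of $\lfloor\frac{n_1-1}{2}\rfloor$ consecutive powers; the paper uses the initial block $\{J,\dots,J^{\lfloor (n_1-1)/2\rfloor}\}$ and verifies it directly). But for parts (2) and (3) your proposal names the hard part without supplying the idea that proves it. The paper's lower bound for $n\geq 2$ rests on a concrete pairing mechanism that has no counterpart in your sketch: alongside the axis ideals $I_{s,k}$ it introduces the co-axis ideals $L_{s,k}$ with $\pi_i(L_{s,k})=R_i$ for $i\neq s$ and $\pi_s(L_{s,k})=J_s^{n_s-k}$, and observes via Lemma \ref{distance2} that all the $L_{s,j}$ have identical distances to every vertex not concentrated in coordinate $s$; consequently, if a run $I_{s,k},\dots,I_{s,k+t}$ of axis ideals is missing from a resolving set $X$, the ideals $L_{s,k},\dots,L_{s,k+t+1}$ can only be separated by being placed in $X$ themselves. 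This yields, coordinate by coordinate, the inequality $\vert\{I_{s,k}\notin X\}\vert\leq\vert\{L_{s,k}\in X\}\vert$, and the exchange $X'=(X\cup\II)\setminus\LL$ then gives $\vert X\vert\geq\vert X'\vert\geq\left(\sum_i n_i\right)-n+\vert\MM\vert$; the exact constant for $n=2$ comes from running the same pairing while accounting for the coincidences $I_{1,0}=L_{2,0}$ and $I_{2,0}=L_{1,0}$ (which is also why the paper treats $n=2$ separately). Nothing in your proposal plays the role of the $L_{s,k}$ or of the swap, so the lower bounds in (2) and (3) --- and hence the exact value in (2) --- are genuinely unproven. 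The upper bounds are likewise deferred rather than executed; for the record, the paper needs no ``corner ideals'': pure axis sets suffice (all $I_{s,k}$, $k\leq n_s-1$, for $n\geq 3$; only $k\leq n_s-2$ for $n=2$, where the remaining ambiguity is resolved by the distance-$3$ behaviour of $I_{s,0}$).

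Two of your stated distance facts also need repair. First, the claim $d(M,I_{s,k})\in\{1,2\}$ for every vertex $M$ is false for $k=0$: by Lemma \ref{distance2}, $d(M,I_{s,0})=3$ whenever $\pi_i(M)=R_i$ for all $i\neq s$ and $\pi_s(M)\neq 0$, and the paper's $n=2$ construction depends on exactly this, separating $J_1^{m_1}\times 0$ from $J_1^{m_1}\times R_2$ by $d(\cdot,I_{1,0})=2$ versus $3$. Second, your bit-column argument for $\epsilon=\lceil\log_2\beta\rceil$ is not airtight as stated: a landmark concentrated in a field coordinate $s_i$ (i.e. $I_{s_i,0}$), or equal to some $E_i$, gives $d(E_i,T)\leq 1$, a value not governed by the bit $[\pi_{s_i}(T)=R_{s_i}]$; since each landmark can produce such a special entry for at most one index, distinctness of profiles only forces $\beta-\vert W\vert\leq 2^{\vert W\vert}$, which is weaker than $\vert W\vert\geq\lceil\log_2\beta\rceil$ (compare $\beta=5$, $\vert W\vert=2$). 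The paper instead runs an induction that halves the set of field coordinates according to the zero pattern of a found separator, gaining one ideal per halving via $1+\lceil\log_2(t/2)\rceil\geq\lceil\log_2 t\rceil$, and --- crucially for an issue your sketch never addresses --- arranges that the ideals so produced lie outside $\II\cup\LL$, so that these $\lceil\log_2\beta\rceil$ landmarks are genuinely additional to the $\left(\sum_i n_i\right)-n$ counted by the exchange argument rather than possibly the same ones counted twice.
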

\begin{proof}
By the comments at the beginning of this section, we know that $R \simeq R_1 \times R_2 \times \ldots \times R_n$ where for each $i=1,2,\ldots,n$, the ring $R_i$ is a local chain ring with the Jacobson radical $J_i$ of nilpotency order $n_i$.

Suppose first that $n=1$. Since $R$ is a local chain ring, the set of vertices in $\AG(R)$ is equal to $\{J, J^2, \ldots, J^{n_1-1} \}$, where $J=J_1$ denotes the Jacobson radical of $R$ of nilpotency order $n_1$. If $n_1 \leq 2$, then $\AG(R)$ is either an empty graph or a singleton, so we can assume that $n_1 \geq 3$. By Lemma \ref{distance2}, we have that $\diam{\AG(R)} \leq 2$. This implies that if $X$ is a resolving set for $\AG(R)$, no two vertices outside $X$ can have exactly the same neighbours in $X$. Now, for each $k \in \{1,2,\ldots,n_1-2\}$, observe that $N[J^{k+1}] = N[J^{k}] \cup \{ J^{n_1-k-1}\}$. 
This implies that any vertex $J^k$ in the resolving set differentiates between two sets of vertices: the vertices $\{J, \ldots, J^{n_1-k-1}\}$ that are not connected to $J^k$ and the vertices $\{J^{n_1-k}, \ldots, J^{n_1-1}\}$ that are connected to $J^k$. The resolving set has to contain all vertices such that any two vertices are differentiated for by at least one of the vertices in the resolving set. So, if the resolving set contains $s$ vertices, then the remaining $n_1-1-s$ vertices have to be differentiated into singletons by putting $s$ differentiating markers between them. But to separate  $n_1-1-s$ vertices into singletons one needs to put down at least $n_1-2-s$ differentiaing markers. Therefore,
$s \geq n_1-2-s$ and thus $s \geq \frac{n_1-2}{2}$. Since $s$ is an integer, $s \geq \lceil{\frac{n_1-2}{2}}\rceil=\lfloor{\frac{n_1-1}{2}}\rfloor$. We have thus proved that each resolving set contains at least $\lfloor\frac{n_1-1}{2}\rfloor$ elements, so $\MD(\AG(R)) \geq \lfloor{\frac{n_1-1}{2}}\rfloor$.
Conversely, we prove that $X=\{J, J^2, \ldots, J^{\lfloor{\frac{n_1-1}{2}}\rfloor}\}$ is a resolving set for a local chain ring $R$ with the Jacobson radical $J$ of nilpotency order $n_1$. If $n_1$ is even, then  $X=\{J, J^2, \ldots, J^{\frac{n_1}{2}-1}\}$ and each $J^{\frac{n_1}{2}-1+k}$ is connected exactly to the first $k-1$ elements from $X$. On the other hand, if $n_1$ is odd, then  $X=\{J, J^2, \ldots, J^{\frac{n_1-1}{2}}\}$ and each $J^{\frac{n_1-1}{2}+k}$ is connected exactly to the first $k$ elements from $X$. Thus, $\MD(\AG(R)) \leq \lfloor{\frac{n_1-1}{2}}\rfloor$.

Suppose now that $n \geq 2$. 
Let $\pi_s$ denote the canonical projection of $R$ onto $R_s$.  Choose $s \in \{1,2,\ldots,n\}$ and $k \in \{0,1,\ldots,n_s-1\}$ and let $I_{s,k}$ be an ideal of $R$ such that $\pi_i(I_{s,k})=0$ for all $i \neq s$ and $\pi_s(I_{s,k})=J_s^k$.

Consider first the case $n=2$. Now, we have three possibilites.

If $R_1$ and $R_2$ are both fields, then define $X=\{I_{1,0}\}$ and observe that $\vert X \vert = 1 = n_1+n_2-2 + 1$. Since $\AG(R)$ has only two vertices, $X$ is obviously a resolving set.

If exactly one of the rings $R_1$ and $R_2$ is a field (say, $R_2$) then define $X=\{I_{1,k}; k \in \{0,1,\ldots,n_1-2\} \}$ and observe that $\vert X \vert = n_1-1 = n_1+n_2-2$. Choose $M \neq L \in \AG(R)$. Then $M=J_1^{m_1} \times J_2^{m_2}$ and $L=J_1^{l_1} \times J_2^{l_2}$
for some integers $0 \leq m_1, m_2 \leq n_1$ and $0 \leq l_1, l_2 \leq 1$. Since $M \neq L$, we have $m_1 \neq l_1$ or $m_2 \neq l_2$. If $m_1=0$ then $M=R_1 \times 0$, so $M \in X$ and we can reason similarly for $l_1$. So, we can assume that $m_1, l_1 \geq 1$. Suppose first that $m_1 \neq l_1$. We can also suppose without loss of generality that $m_1 < l_1$. Since $m_1 \geq 1$, we have $d(M,I_{1,n_1-m_1-1}) > 1$, while $d(L,I_{1,n_1-m_1-1}) = 1$.  So, suppose now that $m_1=l_1$. Then we can assume without loss of generality that $M=J_1^{m_1} \times 0$ and $L=J_1^{m_1} \times R_2$. But then by Lemma \ref{distance2}, we get $d(M,I_{1,0}) = 2$, while $d(L,I_{1,0}) = 3$.
This proves that $X$ is a resolving set.

Finally, suppose neither of $R_1$ and $R_2$ are fields.
Define $X=\{I_{s,k}; s \in \{1,2\}$ and $k \in \{0,1,\ldots,n_s-2\} \}$ and observe that $\vert X \vert = n_1-1+n_2-1 = n_1+n_2-2$.
Again, choose $M \neq L \in \AG(R)$. Then $M=J_1^{m_1} \times J_2^{m_2}$ and $L=J_1^{l_1} \times J_2^{l_2}$
for some integers $0 \leq m_1, m_2 \leq n_1$ and $0 \leq l_1, l_2 \leq n_2$. Since $M \neq L$, we have $m_1 \neq l_1$ or $m_2 \neq l_2$.
We can assume without loss of generality that $m_1 < l_1$. If $m_1=0$ then $M=R_1 \times J_2^{m_2}$ and this implies by Lemma \ref{distance2} that $d(M,I_{2,0}) = 3$, while $d(L,I_{2,0}) \leq 2$. On the other hand, if $m_1 \geq 1$, then $d(M,I_{1,n_1-m_1-1}) > 1$, while $d(L,I_{1,n_1-m_1-1}) = 1$.
This proves again that $X$ is a resolving set.

Since in all three cases, we have $\vert X \vert = n_1+n_2-2 + \epsilon$, where $\epsilon=1$ if $R_1, R_2$ are both fields and $\epsilon=0$ otherwise. Therefore, we have proved that in the case $n=2$, we have $\MD(\AG(R)) \leq n_1+n_2-2+\epsilon$.

Suppose now that $n \geq 3$. Define $X=\{I_{s,k}; s \in \{1,2,\ldots,n\}$ and $k \in \{0,1,\ldots,n_s-1\} \}$ and observe that $\vert X \vert = \sum_{i=1}^n{n_i}$. 
Choose $M \neq L \in \AG(R)$. Then $M=J_1^{m_1} \times J_2^{m_2} \times \ldots \times J_n^{m_n}$ and $L=J_1^{l_1} \times J_2^{l_2} \times \ldots \times J_n^{l_n}$
for some integers $m_1, m_2, \ldots, m_n, l_1, l_2, \ldots, l_n$ with $0 \leq m_i,l_i \leq n_i$ for all $i \in \{1,2,\ldots,n\}$. Since $M \neq L$, we have $i$ such that $m_i \neq l_i$. We can suppose without any loss of generality that $m_i < l_i$. Therefore $m_i \leq n_i-1$ and $d(M,I_{i,n_i-m_i-1}) > 1$, while $d(L,I_{i,n_i-m_i-1}) = 1$.  
This proves that $X$ is a resolving set and thus $\MD(\AG(R)) \leq \sum_{i=1}^n{n_i}$.
We have thus established all the upper bounds stated in the theorem in the case $n \geq 2$.

We now have to find the lower bounds.
Therefore, let $n \geq 2$ and let $X$ be the resolving set for $\AG(R)$.  For $s \in \{1,2,\ldots,n\}$ and $k \in \{0,1,\ldots,n_s-1\}$ , let $L_{s,k}$ be an ideal of $R$ such that $\pi_i(L_{s,k})=R_i$ for all $i \neq s$ and $\pi_s(L_{s,k})=J_s^{n_s-k}$. 
If $I_{s,k}, \ldots, I_{s,k+t} \notin X$ for some $0 \leq t \leq n_s-2-k$, then by Lemma \ref{distance2}, we have  $d(L_{s,k}, I)=\ldots=d(L_{s,k+t+1},I)$ for all ideals $I$ in $X$. Since again by Lemma \ref{distance2}, we have
$d(L_{s,k},L_{s,k+r})=2$ for all $1 \leq r \leq n_s-1-k$, then we can conclude that at least $t$ of the ideals $L_{s,k}, \ldots, L_{s,k+t+1}$ are elements of $X$ .
Consequently, for every $s \in \{1,2,\ldots,n\}$ we have 
\begin{equation}\tag{a}\label{ILeq}
\vert \{I_{s,k}; I_{s,k} \notin X, k=0,1,\ldots, n_s-2\} \vert \leq \vert \{L_{s,k}; L_{s,k} \in X, k=0,1,\ldots, n_s-1\} \vert.
\end{equation}

If $n=2$, define $\II = \emptyset$ and $\LL=\emptyset$ and if $n \geq 3$, define the sets $\II = \bigcup_{s=1,k=0}^{n,n_s-2}{I_{s,k}}$, $\LL =  \bigcup_{s=1,k=0}^{n,n_s-1}{L_{s,k}}$.
Let $N_F = \{s  \in \{1,2,\ldots, n\}; R_s $ is a field$\}$, so $\beta = \vert N_F \vert$. If $\beta \leq 1$ then define $\MM = \emptyset$. Assume now that $\beta \geq 2$. For each subset $N_F' \subseteq N_F$ of cardinality $1 \leq t \leq \beta$, we shall prove that there exists an integer $k \geq \lceil\log_2(t)\rceil$ and distinct ideals $M_1, M_2, \ldots, M_{k} \in X$ such that $\{M_1, M_2, \ldots, M_{k}\} \cap (\II \cup \LL) = \emptyset$. We shall prove this by induction on $t$.  If $t=1$, there is obviously nothing to prove.
Now, assume that the statement holds for $t$ and let us prove it holds also for $t+1$. Choose $s_1, s_2 \in N_F'$. The fact that $X$ is a resolving set gives us the following two possibilities: either at least one of the ideals $I_{s_1,0}$ and $I_{s_2,0}$ is in $X$, or there exists an ideal $M \in X$ such that $d(I_{s_1,0},M) \neq d(I_{s_2,0},M)$. Obviously, $I_{s_1,0}, I_{s_2,0}, M \notin \II \cup \LL$, so in each case we have found an ideal in $X$ that is not in $\II \cup \LL$. Denote this ideal by $N$. Now, $\pi_s(N) \in \{0,R_s\}$ for all $s \in N_F'$. Let $N_F'(1) = \{s; \pi_s(N)=0\}$ and $N_F'(2)=N_F' \setminus N_F'(1)$. Denote by $f_1$ and $f_2$ the cardinalities of $N_F'(1)$ and $N_F'(2)$, respectively. Suppose that $f_1 \geq f_2$ (in the other case the proof is symmetrical). Thus, $f_1 \geq \frac{t}{2}$ and by the induction hypothesis there exists an integer $k \geq \lceil\log_2(\frac{t}{2})\rceil$ and distinct ideals $M_1, M_2, \ldots, M_{k} \in X$ such that $\{M_1, M_2, \ldots, M_{k}\} \cap (\II \cup \LL) = \emptyset$. We define $M_{k+1} = N$.  Since $d(I_{s_1',0},N) = d(I_{s_2',0},N)$ for all $s_1',s_2' \in N_F'(1)$, the ideal $N$ is distinct from all the ideals $M_1, M_2, \ldots, M_{k}$.  Thus, we have found $1+k \geq 1 + \lceil\log_2(\frac{t}{2})\rceil \geq \lceil\log_2(t)\rceil$ ideals in $X \setminus \II \cup \LL$ and the statement follows.
We now use this statement for the set $N_F$ of cardinality $\beta$ and let $\MM$ denote the set of ideals obtained in this way, where $\vert \MM \vert \geq \lceil\log_2(\beta)\rceil$.  

Now define $X' = (X \cup \II \cup \MM) \setminus \LL = (X \cup \II) \setminus \LL$. 
Again, suppose first that $n=2$. If $R_1$ is not a field, then $X$ contains by the inequality (\ref{ILeq}) at least $n_1-1$ of the ideals from the set $\{I_{1,0}, I_{1,1}, \ldots, I_{1,n_1-2}, L_{1,0}, L_{1,1},\ldots,L_{1,n_1-1}\}$ (obviously this also holds in case $R_1$ is a field, since then $n_1-1=0$).
Similarly, we find $n_2-1$ ideals from $X$ from the set $\{I_{2,0}, I_{2,1}, \ldots, I_{2,n_1-2}, L_{2,0}, L_{2,1},\ldots,L_{2,n_1-1}\}$. However, if $I_{1,0} = L_{2,0} \in X$ then again by the inequality (\ref{ILeq}) we know that there are $n_1-2$ ideals in $X$ from the set $\{I_{1,1}, I_{1,2}, \ldots, I_{1,n_1-2}, L_{1,1}, L_{1,2},\ldots,L_{1,n_1-1}\}$. Similarly, if $I_{2,0} = L_{1,0} \in X$ then we find $n_2-2$ ideals in $X$ from the set  $\{I_{2,1}, I_{2,2}, \ldots, I_{2,n_1-2}, L_{2,1}, L_{2,2},\ldots,L_{2,n_1-1}\}$. This always yields at least  $n_1+n_2-2$ ideals in $X$, so $\vert X \vert \geq n_1+n_2-2$ if at least one of the rings $R_1$ and $R_2$ is not a field, and $\vert X \vert \geq 1 = n_1 + n_2 - 2 + 1$ otherwise.  Therefore,
$\MD(\AG(R)) \geq n_1+n_2-2+\epsilon$, where $\epsilon = 1$ if $R_1, R_2$ are both fields and $\epsilon=0$ otherwise.

If $n \geq 3$, we have $\vert X' \vert \leq \vert X \vert$ by the inequality (\ref{ILeq}). The fact that $n \geq 3$ implies that $\II \cap \LL = \emptyset$, so by the construction of $X'$, we have $I_{s,k} \in X'$ for all  $s \in \{1,2,\ldots,n\}$ and $k \in \{0,1,\ldots,n_s-2\}$. Thus, since $\II \cap \MM = \emptyset$, we arrive at $\vert X' \vert \geq \left(\sum_{i=1}^n{n_i}\right)-n+\epsilon$, where $\epsilon=\lceil\log_2(\beta)\rceil$. 
This yields $\vert X \vert \geq \vert X' \vert \geq \left(\sum_{i=1}^n{n_i}\right)-n+\epsilon$, therefore
$\MD(\AG(R)) \geq \left(\sum_{i=1}^n{n_i}\right)-n+\epsilon$ and thus the theorem holds.
\end{proof}

\bigskip

%
%



\bigskip

\end{document}